\newtheorem{lem}{Lemma}
\newtheorem*{theo*}{Theorem}
\newtheorem{cor}{Corollary}
\newenvironment{customlem}[1]
  {\innercustomlem}
  {\endinnercustomlem}
\title{On Erd\H{o}s and S\'{a}rközy's sequences with Property P}
\author[C. Elsholtz]{Christian Elsholtz}
\address[C. Elsholtz]{
Graz University of Technology, Institute of Analysis and Number Theory, Kopernikusgasse 24/II, 8010 Graz, Austria}
\email{elsholtz@math.tugraz.at}
\author[S. Planitzer]{Stefan Planitzer}
\address[S. Planitzer]{
Graz University of Technology, Institute of Analysis and Number Theory, Kopernikusgasse 24/II, 8010 Graz, Austria}
\email{planitzer@math.tugraz.at}
\subjclass[2000]{11B83, 
11N13 
}
\keywords{Sequences with Property P, Sums of two squares, Primes in arithmetic progressions, Distribution of integers with given prime factorization}
\begin{document}

\begin{abstract}
A sequence $A$ of positive integers having the property that no element $a_i \in A$ divides the sum $a_j+a_k$ of two larger elements is said to have `Property P'. We construct an infinite set $S\subset \mathbb{N}$ having Property P with counting function $S(x)\gg\frac{\sqrt{x}}{\sqrt{\log x}(\log\log x )^2(\log \log \log x)^2}$. This improves on an example given by Erd\H{o}s and S\'{a}rközy with a lower bound on the counting function of order $\frac{\sqrt{x}}{\log x}$.
\end{abstract}

\maketitle

\section{Introduction}

Erd\H{o}s and S\'{a}rközy~\cite{DivisibilityProperties} define a monotonically increasing sequence $A=\{a_1<a_2< \ldots\}$ of positive integers to have `Property P' if $a_i \nmid a_j+a_k$ for $i<j<k$. They proved that any infinite sequence of integers with Property P has density $0$. Schoen~\cite{OnA} showed that if an infinite sequence $A$ has Property P and any two elements in $A$ are coprime then the counting function $A(x)=\sum_{a_i<x}{1}$ is bounded from above by $A(x)<2x^{\frac{2}{3}}$ and Baier~\cite{ANote} improved this to $A(x)<(3+\epsilon)x^{\frac{2}{3}}(\log x)^{-1}$ for any $\epsilon >0$. Concerning finite sequences with Property P, Erd\H{o}s and S\'{a}rközy~\cite{DivisibilityProperties} get the lower bound $\max A(x) \geq \lfloor \frac{x}{3} \rfloor+1$ by just taking $A$ to be the set $A=\{x, x -1, \ldots, x  - \lfloor \frac{x}{3} \rfloor \}$ for $x \in \mathbb{N}$.   

Erd\H{o}s and S\'{a}rközy also thought about large sets with Property P with respect to the size of the counting function (cf.~\cite[p. 98]{DivisibilityProperties}). They observed that the set $A=\{q_i^2: q_i \text{ the }i\text{-th prime with }q_i \equiv 3 \bmod 4\}$ has Property P. This uses the fact that the square of a prime $p \equiv 3 \bmod 4$ has only the trivial representation $p^2=p^2+0^2$ as the sum of two squares. With this set $A$ they get
$$A(x) \sim \frac{\sqrt{x}}{\log x}.$$
Erd\H{o}s has asked repeatedly to improve this (see e.g.~\cite[p. 185]{SomeOld},~\cite[p. 535]{SomeOfMy}) and in particular, Erd\H{o}s~\cite{SomeOfMy,SomeOf} asked if one can do better than $a_n \sim (2n \log n)^2$. He wanted to know if it is possible to have $a_n < n^2$. We will not quite achieve this but we go a considerable step in this direction. First, we observe that a set of squares of integers consisting of precisely $k$ prime factors $p \equiv 3 \bmod 4$ also has property P. As for any fixed $k$ this would only lead to a moderate improvement, our next idea is to try to choose $k$ increasing with $x$. In order to do so, we actually use a union of several sets $S_i$ with property P. Together, this union will have a good counting function throughout all ranges of $x$. However, in order to ensure that this union of sets with property P still has property P, we employ a third idea, namely to equip all members $a\in S_i$ with a special indicator factor. This seems to be the first improvement going well beyond the example given by Erd\H{o}s and S\'{a}rközy since $1970$. Our main result will be the following theorem.
\begin{theo*} \label{MainTheorem}
The set $S \subset \mathbb{N}$ constructed explicitly below has Property P and counting function
$$S(x) \gg \frac{\sqrt{x}}{\sqrt{\log x}(\log\log x )^2(\log \log \log x)^2}.$$
\end{theo*}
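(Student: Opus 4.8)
The plan is to construct $S$ as an infinite union $S=\bigcup_{i\ge i_0}S_i$, the piece $S_i$ being tailored to make $S(x)$ large for $x$ in a window around $X_i\asymp\exp\exp(2k_i)$, the pieces being knit together by a single dedicated prime each. I would fix a strictly increasing sequence $q_1<q_2<\cdots$ of primes $\equiv 3\bmod 4$ that is sparse --- say with $q_i$ of polynomial size in $i$, so that $Q:=\{q_i:i\ge 1\}$ has relative density $0$ among the primes $\equiv 3\bmod 4$; these are the \emph{indicator primes}. For $i\ge i_0$ I would take $k_i:=i$, let $M_i$ be the set of squarefree integers with exactly $k_i$ prime factors, all $\equiv 3\bmod 4$ and none in $Q$, and put $S_i:=\{(q_im)^2:m\in M_i\}$ and $S:=\bigcup_{i\ge i_0}S_i$. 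Every element of $S$ is then a perfect square whose square root is squarefree with all prime factors $\equiv 3\bmod 4$.

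To see that $S$ has Property P, suppose $a<b<c$ lie in $S$ with $a\mid b+c$, and write $a=(q_im_1)^2$ with $m_1\in M_i$. From $q_i\equiv 3\bmod 4$ and $q_i^2\mid a\mid b+c$, writing $b=u^2$, $c=v^2$ (both perfect squares) we get $q_i\mid u^2+v^2$, hence $q_i\mid u$ and $q_i\mid v$, since $-1$ is a quadratic non-residue modulo $q_i$. Now $u$ is the square root of an element of some $S_j$, say $u=q_jm'$ with $m'\in M_j$; as $q_i\in Q$ while $m'$ has no prime factor in $Q$, we get $q_i\mid q_j$, so $j=i$, and likewise $c\in S_i$. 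Hence $b=(q_im_2)^2$, $c=(q_im_3)^2$ with $m_1<m_2<m_3$ in $M_i$, and $a\mid b+c$ reduces to $m_1^2\mid m_2^2+m_3^2$. Each prime $p\mid m_1$ is $\equiv 3\bmod 4$ and divides $m_2^2+m_3^2$, so $p\mid m_2$ and $p\mid m_3$; as $m_1$ is squarefree this forces $m_1\mid m_2$, and since $m_1,m_2$ are squarefree with the same number $k_i$ of prime factors, $m_1=m_2$, contradicting $m_1<m_2$.

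The substance of the proof is the lower bound for $S(x)$. For large $x$ I would pick $i$ with $k_i=\tfrac12\log\log x+O(1)$, so that $x$ lies in the window of $S_i$, and bound
$$S(x)\ \ge\ \#\bigl(S_i\cap[1,x]\bigr)\ =\ \#\bigl\{m\in M_i:m\le\sqrt x/q_i\bigr\}.$$
The key input is an estimate of Landau--Sathe--Selberg type: the number of squarefree integers up to $Y$ with exactly $k$ prime factors, all in a fixed residue class of density $\tfrac12$ and none in the density-$0$ set $Q$, is $\gg Y(\log Y)^{-1/2}(\log\log Y)^{-1/2}$ when $k=\tfrac12\log\log Y+O(1)$ --- the central (Erd\H{o}s--Kac) regime, where uniformity in $k$ is essential and the fixed-$k$ Landau asymptotic is useless. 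Applying this with $Y=\sqrt x/q_i$ gives, up to loglog factors, $S(x)\gg\sqrt x\,\bigl(q_i\sqrt{\log x}\sqrt{\log\log x}\bigr)^{-1}$. Since the window of $S_i$ sits at $X_i\asymp\exp\exp(2k_i)$, for such $x$ one has $i\asymp\log\log x$, so the $i$-th indicator prime has size $(\log\log x)^{O(1)}(\log\log\log x)^{O(1)}$; collecting this together with the loglog slack in the counting estimate and the loss from holding $k_i$ fixed across a window gives the claimed $S(x)\gg\sqrt x\,\bigl(\sqrt{\log x}\,(\log\log x)^2(\log\log\log x)^2\bigr)^{-1}$.

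The main obstacle will be precisely this uniform lower bound for integers with a prescribed, slowly growing number $k\asymp\tfrac12\log\log Y$ of prime factors drawn from a single residue class: being the typical order of $\omega$ on that set, it lies beyond the fixed-$k$ Landau asymptotic and demands a Selberg--Delange or saddle-point analysis (or a Sathe--Selberg theorem valid for $k$ up to $\asymp\log\log Y$), and one must also check that deleting the thin set $Q$ and imposing the size cut $m\le\sqrt x/q_i$ cost nothing essential. The rest --- arranging the windows, hence the $k_i$, to cover all large $x$ while keeping the indicator primes $q_i$ (which enter linearly in the denominator) as small as possible --- is more routine bookkeeping, but it is what fixes the precise powers of $\log\log x$ and $\log\log\log x$ in the final bound.
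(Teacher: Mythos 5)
Your construction and your Property P argument are sound, and they are in essence the paper's: squares of squarefree products of primes $\equiv 3\bmod 4$ with a prescribed number of prime factors, glued together by one indicator prime per piece, with Property P coming from the fact that $-1$ is a quadratic non-residue modulo a prime $\equiv 3\bmod 4$. Your indicator mechanism is a mild variant (you force $m$ to avoid the sparse set $Q$ and use $q_i^2$, while the paper uses the fourth power $q_i^4$ and lets $\nu$ range over \emph{all} primes $\equiv 3\bmod 4$), and your counting scheme also differs in detail: you take a single set $S_i$ per window $x\asymp\exp\exp(2i)$, whereas the paper sums the contributions of about $\sqrt{\log_2\sqrt{x}/2}$ sets $S_{k+2},\dots,S_{k+l}$ with $q_i$ the $i$-th prime $\equiv 3\bmod 4$ (so $q_i\asymp i\log i$, and the factor $q_i^{-2}\asymp(\log_2 x)^{-2}(\log_3 x)^{-2}$ is exactly where its exponents come from). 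If your key estimate were available, your single-window version with a well-chosen sparse $Q$ would indeed recover the stated bound (and with $q_i\asymp i^{3/2-\epsilon}$ even slightly more), so the architecture is fine.

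The genuine gap is the step you yourself flag as the main obstacle: the uniform lower bound, in the central regime $k=\tfrac12\log\log Y+O(1)$, for squarefree integers with exactly $k$ prime factors all $\equiv 3\bmod 4$ (and, in your variant, none in $Q$). This is the entire analytic content of the theorem and you assert it rather than prove it. The paper does not get it for free either: it imports Meng's Lemma A, which gives an asymptotic uniformly for $2\le k\le A\log\log x$ whose main-term coefficient is $1+\frac{k-1}{\log\log x}C(3,4)+\frac{2(k-1)(k-2)}{(\log\log x)^2}h''\bigl(\frac{2(k-3)}{3\log\log x}\bigr)+\mathcal{O}\bigl(\frac{k^2}{(\log\log x)^3}\bigr)$, and then must verify that this coefficient stays bounded below by a positive constant for $k\sim\tfrac12\log\log x$; that verification (Corollary~1) needs the explicit value of $C(3,4)=2M(3,4)$ from Languasco--Zaccagnini and explicit numerical bounds on $h''$ near $1/3$ --- it is not a formality. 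Moreover, your variant needs this count with the extra constraint $p\notin Q$, which is not literally in the literature; it should follow by the same Selberg--Delange/Sathe--Selberg analysis because $\sum_{q\in Q}1/q<\infty$ only shifts the relevant ``Poisson parameter'' by $O(1)$, but that is precisely the kind of uniform statement that has to be proved, not invoked. Finally, your closing bookkeeping (``$q_i$ of polynomial size'', ``$(\log\log x)^{O(1)}(\log\log\log x)^{O(1)}$'') is left unquantified: the claimed exponents $2$ and $2$ only come out if you actually fix $Q$ with $q_i\ll i^{3/2}$ (say), and the constant-factor losses from freezing $k_i$ across a window and from deleting $Q$ need the concentration estimate made explicit.
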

We achieve this improvement by not only considering squares of primes $p \equiv 3 \bmod 4$ but products of squares of such primes. More formally we set 
\begin{equation} \label{SDefinition}
S=\bigcup_{i=1}^{\infty}{S_i}.
\end{equation}
Here the sets $S_i$ are defined by
\begin{equation} \label{SIDefinition}
S_i:=\left\{n\in \mathbb{N}: n=q_i^4\nu^2\right\},
\end{equation}
where $\nu$ is the product of exactly $i$ distinct primes $p \equiv 3 \bmod 4$ and we recall that $q_i$ is the $i$-th prime in the residue class $3 \bmod 4$. The rôle of the $q_i$ is an `indicator' which uniquely identifies the set $S_i$ a given integer $n \in S$ belongs to. Results from probabilistic number theory like the Theorem of Erd\H{o}s-Kac suggest that for varying $x$ different sets $S_i$ will yield the main contribution to the counting function $S(x)$. In particular for given $x>0$ the main contribution comes from the sets $S_i$ with
$$\frac{\log \log \sqrt{x}}{2}-\sqrt{\frac{\log \log \sqrt{x}}{2}} \leq i \leq \frac{\log \log \sqrt{x}}{2}+\sqrt{\frac{\log \log \sqrt{x}}{2}}.$$ 

The study of sequences with Property P is closely related to the study of primitive sequences, i.e. sequences where no element divides any other and there is a rich literature on this topic (cf. \cite[Chapter V]{Sequences}). Indeed a similar idea as the one described above was used by Martin and Pomerance~\cite{PrimitiveSets} to construct a large primitive set. While Besicovitch~\cite{OnThe} proved that there exist infinite primitive sequences with positive upper density, Erd\H{o}s~\cite{NoteOn} showed that the lower density of these sequences is always $0$. In his proof Erd\H{o}s used the fact that for a primitive sequence of positive integers the sum $\sum_{i=1}^{\infty}{\frac{1}{a_i\log a_i}}$ converges. In more recent work Banks and Martin~\cite{OptimalPrimitive} make some progress towards a conjecture of Erd\H{o}s which states that in the case of a primitive sequence 
$$\sum_{i=1}^{\infty}{\frac{1}{a_i\log a_i}} \leq \sum_{p \in \mathbb{P}}{\frac{1}{p \log p}}$$ 
holds. Erd\H{o}s~\cite{OnSequences} studied a variant of the Property P problem, also in its multiplicative form.

\section{Notation}

Before we go into details concerning the proof of the Theorem we need to fix some notation. Throughout this paper $\mathbb{P}$ denotes the set of primes and the letter $p$ (with or without index) will always denote a prime number. We write $\log_k$ for the $k$-fold iterated logarithm. The functions $\omega$ and $\Omega$ count, as usual, the prime divisors of a positive integer $n$ without respectively with multiplicity. For two functions $f,g : \mathbb{R} \rightarrow \mathbb{R}^+$ the binary relation $f \gg g$ (and analogously $f \ll g$) denotes that there exists a constant $c > 0$ such that for $x$ sufficiently large $f(x) \geq cg(x)$ ($f(x) \leq cg(x)$ respectively). Dependence of the implied constant on certain parameters is indicated by subscripts. The same convention is used for the Landau symbol $\mathcal{O}$ where $f=\mathcal{O}(g)$ is equivalent to $f \ll g$. We write $f=o(g)$ if $\lim_{x\rightarrow \infty} \frac{f(x)}{g(x)}=0$.

\section{The set $S$ has Property P}

In this section we verify that any union of sets $S_i$ defined in (\ref{SIDefinition}) has Property P.

\begin{lem} \label{DivisibilityLemma}
Let $n_1,n_2$ and $n_3$ be positive integers. If there exists a prime $p \equiv 3 \bmod 4$ with $p | n_1$ and $p \nmid \gcd (n_2,n_3)$, then
$$n_1^2\nmid n_2^2+n_3^2.$$
\end{lem}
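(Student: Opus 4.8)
The plan is to reduce the statement to the classical fact that a prime $p \equiv 3 \bmod 4$ divides a sum of two squares $a^2+b^2$ only when it divides both $a$ and $b$. Granting this, the lemma follows by contraposition in essentially one line: suppose $n_1^2 \mid n_2^2+n_3^2$. Since $p \mid n_1$ we have $p \mid n_1^2$, hence $p \mid n_2^2+n_3^2$; by the classical fact $p \mid n_2$ and $p \mid n_3$, so $p \mid \gcd(n_2,n_3)$, contradicting the hypothesis that $p \nmid \gcd(n_2,n_3)$. Note that only divisibility by $p$ (not by $p^2$) of the right-hand side is used, so no subtlety about exact prime powers arises.

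Thus the only thing to prove is the classical fact, and I would argue as follows. Assume $p \mid a^2+b^2$ but $p \nmid a$. Since $p$ is odd, $a$ is invertible modulo $p$, and from $b^2 \equiv -a^2 \pmod{p}$ we get $(b\,a^{-1})^2 \equiv -1 \pmod{p}$, so $-1$ is a quadratic residue modulo $p$. By Euler's criterion $\left(\frac{-1}{p}\right) \equiv (-1)^{(p-1)/2} \pmod{p}$, and since $p \equiv 3 \bmod 4$ the exponent $(p-1)/2$ is odd, giving $\left(\frac{-1}{p}\right) = -1$, a contradiction. Hence $p \mid a$, and then $p \mid a^2$ forces $p \mid b^2$, so $p \mid b$ as well.

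I do not expect any real obstacle here: the argument is a direct application of a standard lemma on sums of two squares, and the only care needed is the (automatic) observation that $p$ is odd so that $a$ may be inverted modulo $p$. If one preferred to avoid invoking Euler's criterion, one could instead quote the classical two-squares theorem characterising the integers representable as $a^2+b^2$, but the quadratic-residue computation above is short and self-contained, so I would include it directly.
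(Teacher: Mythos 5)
Your proposal is correct and follows essentially the same route as the paper: the paper likewise passes from $n_1^2 \mid n_2^2+n_3^2$ to $p \mid n_2^2+n_3^2$, assumes w.l.o.g.\ $p \nmid n_2$, and derives $(n_3/n_2)^2 \equiv -1 \bmod p$, contradicting that $-1$ is a quadratic non-residue modulo a prime $p \equiv 3 \bmod 4$. Your packaging of this as the classical ``$p \equiv 3 \bmod 4$ divides a sum of two squares only if it divides both'' plus Euler's criterion is just a mild reorganization of the same argument.
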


\begin{proof}
We prove the Lemma by contradiction.  Suppose that $n_1^2|n_2^2+n_3^2$.
By our assumption there exists a prime $p \equiv 3 \bmod 4$ such that   $p|n_1$ and $p \nmid \gcd(n_2,n_3)$. Hence, w.l.o.g. $p \nmid n_2$. 
We have
$$n_2^2+n_3^2 \equiv 0 \bmod p$$
and since $p$ does not divide $n_2$, we get that $n_2$ is invertible $\bmod$ $p$. Hence
$$\left(\frac{n_3}{n_2}\right)^2\equiv -1 \bmod p$$
a contradiction since $-1$ is a quadratic non-residue $\bmod$ $p$.
\end{proof}

\begin{lem} \label{SHasPropertyP}
Any union of sets $S_i$ defined in (\ref{SIDefinition}) has Property P.
\end{lem}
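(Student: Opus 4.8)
The plan is to reduce Property P for the union $S=\bigcup_i S_i$ to an application of Lemma~\ref{DivisibilityLemma}. Recall that a set has Property P if whenever $a_i < a_j < a_k$ are elements of the set, $a_i \nmid a_j + a_k$. So I take three elements $n_1 < n_2 < n_3$ of $S$ and must show $n_1 \nmid n_2 + n_3$. Each $n_\ell$ lies in some $S_{i_\ell}$, meaning $n_\ell = q_{i_\ell}^4 \nu_\ell^2$ where $\nu_\ell$ is a product of exactly $i_\ell$ distinct primes $\equiv 3 \bmod 4$. The key structural observation is that every element of $S$ is of the form $m^2$ where $m = q_i^2 \nu$, and moreover $q_i \equiv 3 \bmod 4$ divides $m$ with $q_i \nmid \nu$ unless $q_i$ happens to appear among the primes of $\nu$ — but even then $q_i^2 \mid m$. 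I'd rather work directly: write $n_1 = a^2$, $n_2 = b^2$, $n_3 = c^2$ with $a = q_{i_1}^2\nu_1$, $b = q_{i_2}^2 \nu_2$, $c = q_{i_3}^2 \nu_3$. Then $n_1 \mid n_2 + n_3$ would say $a^2 \mid b^2 + c^2$, and I want to invoke Lemma~\ref{DivisibilityLemma} with $(n_1, n_2, n_3) \mapsto (a,b,c)$: it suffices to exhibit a prime $p \equiv 3 \bmod 4$ with $p \mid a$ and $p \nmid \gcd(b,c)$.

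The natural candidate for that prime is $p = q_{i_1}$, the indicator prime of the set $S_{i_1}$ containing the smallest element. Since $q_{i_1} \mid a$ (indeed $q_{i_1}^2 \mid a$) and $q_{i_1} \equiv 3 \bmod 4$, I only need $q_{i_1} \nmid \gcd(b,c)$, i.e. that $q_{i_1}$ fails to divide at least one of $b, c$. Here is where I expect the main obstacle, and where the hypothesis $n_1 < n_2 < n_3$ (strict inequalities, with $n_1$ the smallest) gets used. Suppose for contradiction that $q_{i_1} \mid b$ and $q_{i_1} \mid c$. I'd argue about the multiplicity of $q_{i_1}$. If $i_2 \ne i_1$ then $q_{i_1}$ can only divide $b = q_{i_2}^2 \nu_2$ by appearing among the $i_2$ primes of $\nu_2$, contributing exactly $q_{i_1}^1$, whereas $q_{i_1}^2 \mid a$. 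More carefully: the exact power of $q_{i_1}$ dividing $a$ is $q_{i_1}^{4}$ or $q_{i_1}^{6}$ (four from the $q_{i_1}^4$ in $n_1$... wait, $a^2 = n_1$, so $a$ carries $q_{i_1}^2$, possibly $q_{i_1}^3$ if $q_{i_1} \mid \nu_1$). In any case $v_{q_{i_1}}(a) \ge 2$. Meanwhile if $i_2 \neq i_1$, then $v_{q_{i_1}}(b) \le 1$, so $v_{q_{i_1}}(b^2) \le 2$; similarly for $c$. One then checks that $a^2 \mid b^2 + c^2$ combined with the $q_{i_1}$-adic valuations forces a contradiction unless $i_1 = i_2 = i_3$.

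This leaves the case $i_1 = i_2 = i_3 =: i$, i.e. all three elements lie in the same $S_i$. Then $n_\ell = q_i^4 \nu_\ell^2$ with the $\nu_\ell$ products of exactly $i$ distinct primes $\equiv 3 \bmod 4$, and $n_1 < n_2 < n_3$ forces $\nu_1 < \nu_2 < \nu_3$, in particular $\nu_1 \neq \nu_2$ and $\nu_1 \neq \nu_3$. Now pick a prime $p \equiv 3 \bmod 4$ dividing $\nu_1$ but not $\nu_2$ (such $p$ exists: if every prime factor of $\nu_1$ divided $\nu_2$, then since both have exactly $i$ distinct prime factors we'd get $\nu_1 = \nu_2$, contradiction). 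This $p$ satisfies $p \mid a = q_i^2\nu_1$ and $p \nmid b = q_i^2 \nu_2$ (as $p \ne q_i$ since $q_i \nmid \nu_1$... actually $q_i$ could divide $\nu_1$, so I should instead just pick $p \mid \nu_1$, $p \nmid \nu_2$ directly, which automatically gives $p \nmid \gcd(b,c)$). Then $p \mid a$, $p \nmid \gcd(b,c)$, and Lemma~\ref{DivisibilityLemma} applied to $(a,b,c)$ yields $a^2 \nmid b^2 + c^2$, i.e. $n_1 \nmid n_2 + n_3$.

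Assembling these cases: in every case I produce a prime $p \equiv 3 \bmod 4$ with $p$ dividing the ``square root'' of $n_1$ but not dividing the gcd of the square roots of $n_2$ and $n_3$, and Lemma~\ref{DivisibilityLemma} finishes it. I expect the write-up to be short; the only delicate point is organizing the valuation bookkeeping in the mixed case $i_1 \ne i_2$ or $i_1 \ne i_3$ cleanly — alternatively one can avoid valuations entirely by noting that it suffices to find \emph{any} prime $p\equiv 3 \bmod 4$ dividing $a$ with $p\nmid \gcd(b,c)$, and since $q_{i_1}^2\mid a$ while $a<b$ and $a<c$ forces $b,c$ to have different prime support from $a$ in a way that can be exploited directly. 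I would present the unified argument: let $p$ be any prime $\equiv 3\bmod 4$ with $p\mid a$; if no such $p$ works then $p \mid \gcd(b,c)$ for all of them, and in particular for $p=q_{i_1}$, so $q_{i_1}\mid \gcd(b,c)$; handling this via the indicator structure is the crux.
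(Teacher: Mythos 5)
Your strategy is the same as the paper's --- pass to the square roots $a,b,c$ and feed a prime $p\equiv 3\bmod 4$ with $p\mid a$, $p\nmid\gcd(b,c)$ into Lemma~\ref{DivisibilityLemma}, treating the case of three elements of one $S_i$ separately --- but the execution has a genuine gap, exactly at the point you flagged and then talked yourself out of. Definition (\ref{SIDefinition}) does not require $\nu$ to be coprime to $q_i$, so in the same-set case the prime you pick with $p\mid\nu_1$, $p\nmid\nu_2$ may be $q_i$ itself, and then $p$ \emph{does} divide $\gcd(b,c)$, because $b$ and $c$ carry the indicator factor $q_i^2$; your parenthetical claim that choosing $p\mid\nu_1$, $p\nmid\nu_2$ \emph{automatically} gives $p\nmid\gcd(b,c)$ is false. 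Worse, there are admissible triples for which no prime works at all: take $\nu_1=q_i\mu$, $\nu_2=\mu r_2$, $\nu_3=\mu r_3$, with $\mu$ a product of $i-1$ distinct primes $\equiv 3\bmod 4$ and $r_2,r_3$ further such primes, $q_i<r_2<r_3$. Then $n_1<n_2<n_3$, $a=q_i^3\mu$, $\gcd(b,c)=q_i^2\mu$, and every prime dividing $a$ divides $\gcd(b,c)$, so Lemma~\ref{DivisibilityLemma} cannot be applied to $(a,b,c)$ and your concluding summary (``in every case I produce such a $p$'') is wrong, even though $n_1\nmid n_2+n_3$ does hold here. The mixed-index case has the same soft spot: when $q_{i_1}$ divides both $b$ and $c$ exactly once, valuation bookkeeping alone gives only $v_{q_{i_1}}(b^2+c^2)\ge 2$, which does not contradict $v_{q_{i_1}}(a^2)\ge 4$; the unnamed ``one then checks'' step is another quadratic-non-residue argument, not mere arithmetic of exponents.

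The missing device is the paper's cancellation step: before invoking Lemma~\ref{DivisibilityLemma}, divide all three elements by the largest power of $q_{i_1}$ dividing each of them. From $a^2\mid b^2+c^2$ one passes to the corresponding divisibility of the reduced squares, and in every configuration where your direct choice of $p$ fails, after this cancellation $q_{i_1}$ still divides the root of the first element but no longer divides the root of at least one of the other two (in the mixed case because its exponent in those roots was at most $1$; in the same-set configuration above because the exponents are $3,2,2$), so Lemma~\ref{DivisibilityLemma} applied to the reduced triple finishes. This is precisely the paper's $l\in\{0,2\}$ manoeuvre in the case of distinct indices. For the same-set case the paper instead argues that if no suitable $p$ exists then $a_i\mid a_j$ and $a_i\mid a_k$, whence $a_i=a_j=a_k$; note that this step itself tacitly treats only $q_i\nmid\nu$, so your instinct that this corner needs care was correct --- it just needs the cancellation argument (or the harmless extra condition $\gcd(\nu,q_i)=1$ built into (\ref{SIDefinition})) rather than the assertion you settled on.
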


\begin{proof}
Suppose by contradiction that there exist $a_i \in S_i$, $a_j\in S_j$ and $a_k \in S_k$ with $a_i<a_j<a_k$ and $a_i|a_j+a_k$. First suppose that either $i \neq j$ or $i \neq k$. Define $l \in \{0,2\}$ to be the largest exponent such that $q_i^l|\gcd(a_i,a_j,a_k)$ where we again recall that $q_i$ was defined as the $i$-th prime in the residue class $3 \bmod 4$. Then 
$$\frac{a_i}{q_i^l}\bigg|\frac{a_j}{q_i^l}+\frac{a_k}{q_i^l}.$$
By construction of the sets $S_i,S_j$ and $S_k$ we have that $q_i\big|\frac{a_i}{q_i^l}$ and w.l.o.g. $q_i \nmid \frac{a_j}{q_i^l}$. An application of Lemma \ref{DivisibilityLemma} finishes this case. 

If $i=j=k$ then $\Omega(a_i)=\Omega(a_j)=\Omega(a_k)$. If there is some prime $p$ with $p|a_i$ and ($p\nmid a_j$ or $p \nmid a_k$) we may again use Lemma \ref{DivisibilityLemma}. If no such $p$ exists, then $a_i|a_j$ and $a_i|a_k$ trivially holds. With the restriction on the number of prime factors we get that $a_i=a_j=a_k$.
\end{proof}

\section{Products of $k$ distinct primes}

In order to establish a lower bound for the counting functions of the sets $S_i$ in (\ref{SIDefinition}) we need to count square-free integers containing exactly $k$ distinct prime factors $p \equiv 3 \bmod 4$, but no others, where $k \in \mathbb{N}$ is fixed. For $k \geq 2$ and $\pi_k(x):=\#\{n \leq x: \omega(n)=\Omega(n)=k\}$ Landau~\cite{SurQuelques} proved the following asymptotic formula:
\begin{equation*} 
\pi_k(x) \sim \frac{x(\log_2 x)^{k-1}}{(k-1)!\log x}.
\end{equation*}
We will need a lower bound of similar asymptotic growth as the formula above for the quantity 
$$\pi_k(x;4,3):=\#\{n \leq x: p|n \Rightarrow p \equiv 3 \bmod 4,\text{ } \omega(n)=\Omega(n)=k\}.$$
Very recently Meng~\cite{LargeBias} used tools from analytic number theory to prove a generalization of this result to square-free integers having $k$ prime factors in prescribed residue classes. The following is contained as a special case in~\cite[Lemma 9]{LargeBias}:
\begin{customlem}A \label{MengLemma}
For any $A>0$, uniformly for $2 \leq k \leq A\log \log x$, we have
\begin{align*}
\pi_k(x;4,3)=&\frac{1}{2^k}\frac{x}{\log x}\frac{(\log \log x)^{k-1}}{(k-1)!} \times \\
&\left(1+\frac{k-1}{\log\log x}C(3,4)+\frac{2(k-1)(k-2)}{(\log \log x)^2}h''\left(\frac{2(k-3)}{3 \log \log x}\right)+\mathcal{O}_{A}\left(\frac{k^2}{(\log \log x)^3}\right)\right),
\end{align*}
where $C(3,4)=\gamma+\sum_{p \in \mathbb{P}}\left(\log\left(1-\frac{1}{p}\right)+\frac{2\lambda(p)}{p}\right)$, $\gamma$ is the Euler-Mascheroni constant, $\lambda(p)$ is the indicator function of primes in the residue class $3 \bmod 4$ and 
$$h(x)=\frac{1}{\Gamma \left(\frac{x}{2}+1\right)}\prod_{p\in \mathbb{P}}\left(1-\frac{1}{p}\right)^{x/2}\left(1+\frac{x\lambda(p)}{p}\right).$$
\end{customlem}
We will show that Lemma \ref{MengLemma} with some extra work implies the following Corollary.
\begin{cor} \label{PiKLowerBound}
Uniformly for $\frac{\log \log x}{2}-1 \leq k \leq \frac{\log \log x}{2}+\sqrt{\frac{\log \log x}{2}}$ we have
$$\pi_k(x;4,3) \gg \frac{1}{2^k}\frac{x}{\log x}\frac{(\log_2x)^{k-1}}{(k-1)!}.$$
\end{cor}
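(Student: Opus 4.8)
The plan is to deduce Corollary \ref{PiKLowerBound} from Lemma \ref{MengLemma} by showing that, in the stated range of $k$, the bracketed factor
\[
1+\frac{k-1}{\log\log x}C(3,4)+\frac{2(k-1)(k-2)}{(\log\log x)^2}h''\!\left(\frac{2(k-3)}{3\log\log x}\right)+\mathcal{O}_A\!\left(\frac{k^2}{(\log\log x)^3}\right)
\]
is bounded below by a positive constant for all sufficiently large $x$. Granting that, the corollary is immediate since the remaining factor $\frac{1}{2^k}\frac{x}{\log x}\frac{(\log_2 x)^{k-1}}{(k-1)!}$ is exactly the quantity appearing on the right-hand side of the asserted bound.

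To control the bracket, I would first fix $A$: since $k\le\frac{\log\log x}{2}+\sqrt{\frac{\log\log x}{2}}$, we have $k\le A\log\log x$ for any fixed $A>\tfrac12$, say $A=1$, so Lemma \ref{MengLemma} applies uniformly. Next I would record that in our range $\frac{k-1}{\log\log x}\to\tfrac12$ and $\frac{2(k-3)}{3\log\log x}\to\tfrac13$ as $x\to\infty$, and that $\frac{2(k-1)(k-2)}{(\log\log x)^2}\to\tfrac12$, while the error term is $\mathcal{O}(1/\log\log x)=o(1)$. Hence the bracket converges (uniformly in $k$ over the window, by a short equicontinuity argument) to
\[
1+\tfrac12 C(3,4)+\tfrac12\,h''\!\left(\tfrac13\right).
\]
So the real content is to verify that this limiting constant is strictly positive; then for $x$ large the bracket exceeds, say, half of it, and we are done.

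The main obstacle is therefore the analytic estimation of $C(3,4)$ and of $h''(1/3)$, i.e.\ showing $1+\tfrac12 C(3,4)+\tfrac12 h''(1/3)>0$ (or at least $\ne$ a value that could be cancelled). For $C(3,4)=\gamma+\sum_p\big(\log(1-\tfrac1p)+\tfrac{2\lambda(p)}{p}\big)$ I would split the prime sum by residue class mod $4$: for $p\equiv1\bmod4$ the term is $\log(1-\tfrac1p)<0$, and for $p\equiv3\bmod4$ it is $\log(1-\tfrac1p)+\tfrac2p>0$; grouping and using $\log(1-t)=-t-t^2/2-\cdots$ together with the comparison of $\sum_{p\equiv3}1/p$ and $\sum_{p\equiv1}1/p$ (both $\sim\tfrac12\log\log$, with a convergent difference governed by $L(1,\chi_4)$) shows the series converges and lets one bound $C(3,4)$ numerically to sufficient precision. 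For $h$, I would differentiate the logarithm $\log h(x)=-\log\Gamma(\tfrac x2+1)+\sum_p\big(\tfrac x2\log(1-\tfrac1p)+\log(1+\tfrac{x\lambda(p)}{p})\big)$ twice, again splitting by residue class, and evaluate at $x=1/3$; the $\Gamma$-term contributes $-\tfrac14\psi'(\tfrac76)$ and the prime series is rapidly convergent, so one obtains a numerical value for $h''(1/3)$. Combining the two numerical estimates yields the desired strict positivity.

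A cleaner alternative, which avoids pinning down the constant, is to observe that one does not need the bracket bounded below by its exact limit: it suffices that the limit be \emph{nonzero}, and in fact even that can be sidestepped. Since every term in Meng's expansion is an honest main-term contribution and $\pi_k(x;4,3)$ is a nonnegative counting function, one can instead apply Lemma \ref{MengLemma} at a slightly smaller value $k'$ (still in the admissible window) for which monotonicity-type or ratio arguments force positivity, or simply invoke the crude lower bound $\pi_k(x;4,3)\ge\pi_{k'}(x;4,3)-(\text{stuff})$; however, the direct route above is the one I would write up, with the positivity of $1+\tfrac12 C(3,4)+\tfrac12 h''(1/3)$ as the single computational lemma to be checked, this being the step I expect to be the most delicate and the place where an explicit numerical estimate is unavoidable.
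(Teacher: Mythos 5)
Your proposal follows essentially the same route as the paper: apply Meng's lemma, reduce the corollary to a uniform positive lower bound on the coefficient $1+\tfrac{1}{2}C(3,4)+\tfrac{1}{2}h''\!\left(\frac{2(k-3)}{3\log\log x}\right)$ with the argument of $h''$ near $\tfrac{1}{3}$, and settle that by explicit numerical estimates. The ``computational lemma'' you defer is exactly what the paper verifies: $C(3,4)=2M(3,4)>0.0964$ via the Languasco--Zaccagnini computations and $h''>-0.492$ on a small interval around $\tfrac{1}{3}$ (which supplies the uniformity in $k$ you mention), giving the admissible constant $c=0.802$, so the positivity you flag as the delicate step does indeed hold.
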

\begin{proof}
In view of Lemma \ref{MengLemma} and with $k\sim\frac{\log \log x}{2}$ we see that it suffices to check that, independent of the choice of $k$ and for sufficiently large $x$, there exists a constant $c>0$ such that 
\begin{equation} \label{PiKLowerBound c definition}
1+\frac{C(3,4)}{2}+\frac{1}{2}h''\left(\frac{2(k-3)}{3 \log \log x}\right) \geq c.
\end{equation}
Note that the left hand side of the above inequality is exactly the coefficient of the main term $\frac{1}{2^k}\frac{x}{\log x}\frac{(\log_2x)^{k-1}}{(k-1)!}$ for $k$ in the range given in the Corollary. The constant $C(3,4)$ does not depend on $k$. Using Mertens' Formula (cf. \cite[p. 19: Theorem 1.12]{IntroductionTo}) in the form
$$\sum_{\substack{p \in \mathbb{P} \\ p \leq x}}\log\left(1-\frac{1}{p}\right)=-\gamma-\log\log x+ o(1)$$
we get
$$C(3,4)=\gamma+\sum_{p\in \mathbb{P}}\left(\log\left(1-\frac{1}{p}\right)+\frac{2\lambda(p)}{p}\right)=2M(3,4),$$
where $M(3,4)$ is the constant appearing in
$$\sum_{p \in \mathbb{P}}\frac{\lambda (p)}{p} = \frac{\log \log x}{2}+M(3,4)+\mathcal{O}\left(\frac{1}{\log x}\right),$$
which was studied by Languasco and Zaccagnini in~\cite{ComputingThe}\footnote{Note that our constant $M(3,4)$ corresponds to the constant $M(4,3)$ in the work of Languasco and Zaccagnini.}. The computational results of Languasco and Zaccagnini imply that $0.0482<M(3,4)<0.0483$ and hence allow for the following lower bound for $C(3,4)$:
\begin{equation} \label{PiKLowerBound c34 bound}
C(3,4) =2M(3,4)>0.0964.
\end{equation}
It remains to get a lower bound for $h''\left(\frac{2(k-3)}{3 \log \log x}\right)$, where the function $h$ is defined as in Lemma~\ref{MengLemma}. A straight forward calculation yields that
$$h'=\prod_{p \in \mathbb{P}}\left(1-\frac{1}{p}\right)^{x/2}\left(1+\frac{x\lambda(p)}{p}\right)\frac{\Gamma\left(\frac{x}{2}+1\right)\left(\sum_{p \in \mathbb{P}}\frac{1}{2}\log\left(1-\frac{1}{p}\right)+\frac{\lambda(p)}{p+x\lambda(p)}\right)-\frac{1}{2}\Gamma'\left(\frac{x}{2}+1\right)}{\Gamma\left(\frac{x}{2}+1\right)^2}$$
and
$$h''(x)=f(x)\prod_{p \in \mathbb{P}}\left(1-\frac{1}{p}\right)^{x/2}\left(1+\frac{x\lambda(p)}{p}\right),$$
where
\begin{align*}
f(x)&=\frac{\left(\sum_{p \in \mathbb{P}}\frac{1}{2}\log\left(1-\frac{1}{p}\right)+\frac{\lambda(p)}{p+x\lambda(p)}\right)^2}{\Gamma\left(\frac{x}{2}+1\right)}-\frac{\Gamma''\left(\frac{x}{2}+1\right)}{4\Gamma\left(\frac{x}{2}+1\right)^2}-\frac{\sum_{p \in \mathbb{P}}\frac{\lambda(p)}{(p+\lambda(p)x)^2}}{\Gamma\left(\frac{x}{2}+1\right)}\\
&-\frac{\Gamma'\left(\frac{x}{2}+1\right)\left(\sum_{p \in \mathbb{P}}\frac{1}{2}\log\left(1-\frac{1}{p}\right)+\frac{\lambda(p)}{p+x\lambda(p)}\right)}{\Gamma\left(\frac{x}{2}+1\right)^2}+\frac{\Gamma'\left(\frac{x}{2}+1\right)^2}{2\Gamma\left(\frac{x}{2}+1\right)^3}.
\end{align*}
Note that for $x \rightarrow \infty$ and $\frac{\log \log x}{2}-1 \leq k \leq \frac{\log \log x}{2}+\sqrt{\frac{\log \log x}{2}}$ the term $\frac{2(k-3)}{3 \log \log x}$ gets arbitrarily close to $\frac{1}{3}$. Hence we may suppose that $\frac{99}{300}\leq \frac{2(k-3)}{3 \log \log x} \leq \frac{101}{300}$ and it suffices to find a lower bound for $h''(x)$ where $\frac{99}{300}\leq x \leq \frac{101}{300}$. For $x$ in this range Mathematica provides the following bounds on the Gamma function and its derivatives
$$0.9271 \leq \Gamma\left(\frac{x}{2}+1 \right) \leq 0.9283 \text{, } -0.3104 \leq \Gamma'\left(\frac{x}{2}+1 \right) \leq -0.3058 \text{, } 1.3209 \leq \Gamma''\left(\frac{x}{2}+1 \right) \leq 1.3302.$$
Furthermore we have
$$\sum_{p\in \mathbb{P}}\frac{\lambda(p)}{(p+x)^2}<\sum_{p\in \mathbb{P}}\frac{\lambda(p)}{p^2}<\sum_{\substack{p \in \mathbb{P} \\ p \leq 10^4}}\frac{\lambda(p)}{p^2}+\sum_{n>10^4}\frac{1}{n^2}<0.1485+\int_{x=10^4}^{\infty}\frac{\mathrm{d}x}{x^2}=0.1486.$$
Later we will use that
\begin{align*}
\sum_{p \in \mathbb{P}}\left(\frac{1}{2}\log\left(1-\frac{1}{p}\right)+\frac{\lambda(p)}{p+x}\right)&=\sum_{p \in \mathbb{P}}\left(\frac{1}{2}\log\left(1-\frac{1}{p}\right)+\frac{\lambda(p)}{p}\right)-x\sum_{p \in \mathbb{P}}\frac{\lambda(p)}{p^2+px} \\
&>\sum_{p \in \mathbb{P}}\left(\frac{1}{2}\log\left(1-\frac{1}{p}\right)+\frac{\lambda(p)}{p}\right)-x\sum_{p \in \mathbb{P}}\frac{\lambda(p)}{p^2} \\
&=-\frac{\gamma}{2}+M(3,4)-x\sum_{p \in \mathbb{P}}\frac{\lambda(p)}{p^2}>-0.2905,
\end{align*}
and
$$\sum_{p \in \mathbb{P}}\left(\frac{1}{2}\log\left(1-\frac{1}{p}\right)+\frac{\lambda(p)}{p+x}\right) <\sum_{p \in \mathbb{P}}\left(\frac{1}{2}\log\left(1-\frac{1}{p}\right)+\frac{\lambda(p)}{p}\right)=-\frac{\gamma}{2}+M(3,4)<-0.2403.$$
Finally, using $\log(1+\frac{x}{p})\leq \frac{x}{p}$, we get
\begin{align*}
0&\leq \prod_{p \in \mathbb{P}}\left(1-\frac{1}{p}\right)^{x/2}\left(1+\frac{x\lambda(p)}{p}\right)\leq \exp\left(x\left(\sum_{p \in \mathbb{P}}\left(\frac{1}{2}\log\left(1-\frac{1}{p}\right)+\frac{\lambda(p)}{p}\right)\right)\right) \\
&=\exp\left(x\left(-\frac{\gamma}{2}+M(3,4)\right)\right)<\exp\left(-\frac{99}{300}\cdot 0.2403\right)<0.9238.
\end{align*}
Applying the explicit bounds calculated above, for $\frac{99}{300} \leq x \leq \frac{101}{300}$ we obtain:
$$f(x) \geq \frac{0.2403^2}{0.9283}-\frac{1.3302}{4\cdot 0.9271^2}-\frac{0.1486}{0.9271} -\frac{0.3104 \cdot 0.2905}{0.9271^2}+\frac{0.3058^2}{2\cdot 0.9283^3} > -0.5315.$$
which implies for sufficiently large $x$:
$$h''\left(\frac{2(k-3)}{3 \log \log x}\right) > -0.492.$$
Together with \eqref{PiKLowerBound c34 bound} this leads to an admissible choice of $c=0.802$ in \eqref{PiKLowerBound c definition}.
\end{proof}

\section{The counting function $S(x)$}

\begin{proof}[Proof of Theorem]
As in (\ref{SDefinition})  we set 
$$S= \bigcup_{i=1}^{\infty}S_i$$
where the sets $S_i$ are defined as in (\ref{SIDefinition}). The set $S$ has Property P by Lemma \ref{SHasPropertyP} and it remains to work out a lower bound for the size of the counting function $S(x)$. For sufficiently large $x$ there exists a uniquely determined integer $k \in \mathbb{N}$ such that $e^{2e^{2k}} \leq x  < e^{2e^{2(k+1)}}$ hence 
\begin{equation} \label{iInequality}
k \leq \frac{\log_2 \sqrt{x}}{2} < k+1. 
\end{equation}
It depends on the size of $x$, which $S_i$ makes the largest contribution. For a given $x$ we take several sets $S_{k+2},S_{k+3}, \ldots, S_{k+l}$, $l = \lfloor\sqrt{\frac{\log_2 \sqrt{x}}{2}} \rfloor$, as the number of prime factors $p \equiv 3 \bmod 4$ of a typical integer less than $x$ is in 
$$\left[\frac{\log_2 x}{2} - \sqrt{\frac{\log_2 x}{2}},\frac{\log_2 x}{2} + \sqrt{\frac{\log_2 x}{2}}\right].$$ 
Using Corollary \ref{PiKLowerBound} as well as the fact that the $i$-th prime in the residue class $3 \bmod 4$ is asymptotically of size $2 i \log i$ for given $2 \leq j \leq l$ we get

\begin{equation} \label{SiInequality}
S_{k+j}(x) \gg \underbrace{\frac{\sqrt{\frac{x}{16(k+j)^4\log^4(k+j)}}}{ \log\left(\sqrt{\frac{x}{16(k+j)^4\log^4(k+j)}}\right)}}_{\mathrm{F}_1} \cdot \underbrace{\frac{\left(\log_2 \sqrt{\frac{x}{16(k+j)^4\log^4(k+j)}}\right)^{k+j-1}}{2^{k+j} (k+j-1)!}}_{\mathrm{F}_2}.
\end{equation}
We deal with the fractions $\mathrm{F}_1$ and $\mathrm{F}_2$ on the right hand side of (\ref{SiInequality}) separately. With the given range of $j$ and (\ref{iInequality}) we have that
$$\mathrm{F}_1 \gg \frac{\sqrt{x}}{\log x (\log_2 x)^2 (\log_3x)^2}.$$
It remains to deal with $\mathrm{F}_2$. Using the given range of $k$ and $j$ we have that $k+j \leq \log_2\sqrt{x}$ and, again for sufficiently large $x$, for the numerator of $F_2$ we get
\begin{align*}
\log_2^{k+j-1} \sqrt{\frac{x}{16(k+j)^4\log^4(k+j)}} &\gg (\log(\log \sqrt{x}-\log 4 -2\log_3\sqrt{x}-2\log_4 \sqrt{x}))^{k+j-1} \\
&\gg (\log (\log \sqrt{x}-5\log_3 \sqrt{x}))^{k+j-1} \\
&=  \left(\log_2\sqrt{x}+\log\left(1-\frac{5\log_3\sqrt{x}}{\log \sqrt{x}}\right)\right)^{k+j-1} \\
&\gg \left(\log_2 \sqrt{x}-\frac{10\log_3 \sqrt{x}}{\log \sqrt{x}}\right)^{k+j-1} \\
&\gg\left(1-\frac{10 \log_3\sqrt{x}}{\log \sqrt{x} \log_2\sqrt{x}}\right)^{\frac{\log_2 \sqrt{x}}{2}+\sqrt{\frac{\log_2 \sqrt{x}}{2}}-1}\log_2^{k+j-1}\sqrt{x}\\
&\gg\log_2^{k+j-1}\sqrt{x}.
\end{align*}
Here we used that
$$\lim_{x \rightarrow \infty}\left(1-\frac{10 \log_3\sqrt{x}}{\log \sqrt{x} \log_2\sqrt{x}}\right)^{\frac{\log_2 \sqrt{x}}{2}+\sqrt{\frac{\log_2 \sqrt{x}}{2}}-1} = 1$$
and that for $0 \leq y \leq \frac{1}{2}$ we certainly have that $\log(1-y) \geq -2y$.
To deal with the denominator of $\mathrm{F}_2$ we apply Stirling's Formula and get
\begin{align*}
(k+j-1)! &\ll \left(\frac{k+j-1}{e}\right)^{k+j-1}\sqrt{k+j-1} \ll \left( \frac{\log_2 \sqrt{x}+2(j-1)}{2e}\right)^{k+j-1}\sqrt{\log_2x} \\
&\ll (\log_2 \sqrt{x}+2(j-1))^{k+j-1}\frac{\sqrt{\log_2x}}{2^{k+j-1}e^{\frac{\log_2\sqrt{x}}{2}+j-2}} \\
&\ll (\log_2 \sqrt{x}+2(j-1))^{k+j-1}\frac{\sqrt{\log_2x}}{2^{k+j-1}e^{j-2}\sqrt{\log x}}.
\end{align*}
Altogether we get 
\begin{equation} \label{SikEquation}
\begin{split}
\mathrm{F}_2 &\gg \frac{\sqrt{\log x}}{\sqrt{\log_2 x}} e^{j-2}\left(\frac{\log_2 \sqrt{x}}{\log_2 \sqrt{x}+2(j-1)}\right)^{k+j-1} \\
&\gg \frac{\sqrt{\log x}}{\sqrt{\log_2 x}} e^{j-2}\left(\frac{\log_2 \sqrt{x}}{\log_2 \sqrt{x}+2(j-1)}\right)^{\frac{\log_2 \sqrt{x}}{2}+j-1}.
\end{split}
\end{equation}
Since
$$\left(\frac{\log_2 \sqrt{x}}{\log_2 \sqrt{x}+2(j-1)}\right)^{\frac{\log_2 \sqrt{x}}{2}} \sim \frac{1}{e^{j-1}}$$
it suffices to check that for any $x>0$ and for our choices of $j$ there exists a fixed constant $c>0$ such that
\begin{equation} \label{cInequality}
\left(1+\frac{2(j-1)}{\log_2\sqrt{x}}\right)^{1-j}\geq c.
\end{equation}
For $j\geq 2$ we have that $\left(1+\frac{2(j-1)}{\log_2\sqrt{x}}\right)^{1-j}$ is monotonically decreasing in $j$ and get
$$\left(1+\frac{2(j-1)}{\log_2\sqrt{x}}\right)^{1-j} \geq \left(1+\frac{2\sqrt{\frac{\log_2 \sqrt{x}}{2}}}{\log_2\sqrt{x}}\right)^{-\sqrt{\frac{\log_2 \sqrt{x}}{2}}}=\left(1+\frac{1}{\sqrt{\frac{\log_2\sqrt{x}}{2}}}\right)^{-\sqrt{\frac{\log_2 \sqrt{x}}{2}}}\geq\frac{1}{e}.$$
Therefore for $j \geq 2$ the constant $c$ in (\ref{cInequality}) may be chosen as $c=\frac{1}{e}$ for sufficiently large $x$. Together with (\ref{SikEquation}) this implies
$$F_2 \gg \frac{\sqrt{\log x}}{\sqrt{\log_2x}}.$$
Altogether for the counting function of any of the sets $S_i$ with $\lfloor \frac{\log_2\sqrt{x}}{2}\rfloor+2 \leq i \leq \lfloor \frac{\log_2\sqrt{x}}{2}\rfloor + \lfloor \sqrt{\frac{\log_2\sqrt{x}}{2}}\rfloor$ we have 
$$S_i(x) \gg \frac{\sqrt{x}}{\sqrt{\log x}(\log_2 x)^{\frac{5}{2}}(\log_3 x)^2}.$$
Summing these contributions up we finally get
$$S(x) \gg \frac{\sqrt{x}}{\sqrt{\log x}(\log_2 x)^2(\log_3 x)^2}.$$
\end{proof}

\section*{Acknowledgement}
The authors are supported by the Austrian Science Fund (FWF): W1230, Doctoral Program `Discrete Mathematics'. Parts of this research work were done when the first author was visiting the FIM at ETH Z\"urich, and the second author was
visiting the Institut \'Elie Cartan de Lorraine of the University of
Lorraine. The authors thank these institutions for their hospitality. The authors are also grateful to the referee for suggestions on the manuscript and would like to thank Xianchang Meng for some discussion on his recent paper \cite{LargeBias}.

\begin{bibdiv}
\begin{biblist}

\bib{ANote}{article}{
  Title                    = {A note on {P}-sets},
  Author                   = {Baier, Stephan},
  Journal                  = {Integers},
  Year                     = {2004},
  Pages                    = {A13, 6},
  Volume                   = {4}
}

\bib{OptimalPrimitive}{article}{
  Title                    = {Optimal primitive sets with restricted primes},
  Author                   = {Banks, William D.},
  Author                   = {Martin, Greg},
  Journal                  = {Integers},
  Year                     = {2013},
  Pages                    = {A69, 10},
  Volume                   = {13}
}

\bib{OnThe}{article}{
  Title                    = {On the density of certain sequences of integers},
  Author                   = {Besicovitch, A. S.},
  Journal                  = {Math. Ann.},
  Year                     = {1935},
  Number                   = {1},
  Pages                    = {336--341},
  Volume                   = {110}
}

\bib{NoteOn}{article}{
  Title                    = {Note on {S}equences of {I}ntegers {N}o {O}ne of {W}hich is
 {D}ivisible {B}y {A}ny {O}ther},
  Author                   = {Erd{\H{o}}s, Paul},
  Journal                  = {J. London Math. Soc.},
  Year                     = {1935},
  Number                   = {1},
  Pages                    = {126--128},
  Volume                   = {10}
}

\bib{OnSequences}{article}{
  Title                    = {On sequences of integers no one of which divides the product of two others and on some related problems},
  Author                   = {Erd{\H{o}}s, Paul},
  Journal                  = {Mitt. Forsch.-Inst. Math. Mech. Univ. Tomsk},
  Year                     = {1938},
  Pages                    = {74--82},
  Volume                   = {2}
}

\bib{SomeOld}{incollection}{
  Title                    = {Some old and new problems on additive and combinatorial number
 theory},
  Author                   = {Erd{\H{o}}s, Paul},
  Booktitle                = {Combinatorial {M}athematics: {P}roceedings of the {T}hird
 {I}nternational {C}onference ({N}ew {Y}ork, 1985), \textit{Ann. New York Acad. Sci.}, vol. 555},
  Note                     = {New York Acad. Sci., New York (1989)},
  Pages                    = {181--186},
  Series                   = {Ann. New York Acad. Sci.},
  Volume                   = {555}
}

\bib{SomeOfMy}{article}{
  Title                    = {Some of my favourite unsolved problems},
  Author                   = {Erd{\H{o}}s, Paul},
  Journal                  = {Math. Japon.},
  Year                     = {1997},
  Number                   = {3},
  Pages                    = {527--538},
  Volume                   = {46}
}

\bib{SomeOf}{incollection}{
  Title                    = {Some of my new and almost new problems and results in
 combinatorial number theory},
  Author                   = {Erd{\H{o}}s, Paul},
  Booktitle                = {Number theory ({E}ger, 1996)},
  Note                     = {de Gruyter, Berlin (1998)},
  Pages                    = {169--180}
}

\bib{DivisibilityProperties}{article}{
  Title                    = {On the divisibility properties of sequences of integers},
  Author                   = {Erd{\H{o}}s, Paul},
  Author                   = {S\'{a}rközi, Andr{á}s},
  Journal                  = {Proc. London Math. Soc.},
  Year                     = {1970},
  Pages                    = {97-- 101},
  Volume                   = {21}
}

\bib{Sequences}{book}{
  Title                    = {Sequences},
  Author                   = {Halberstam, Heini},
  Author                   = {Roth, Klaus Firedrich},
  Publisher                = {Springer-Verlag, New York-Berlin},
  Year                     = {1983},
  Edition                  = {second edn.}
}

\bib{SurQuelques}{article}{
  Title                    = {Sur quelques probl\`emes relatifs \`a la distribution des
 nombres premiers},
  Author                   = {Landau, E.},
  Journal                  = {Bull. Soc. Math. France},
  Year                     = {1900},
  Pages                    = {25--38},
  Volume                   = {28}
}

\bib{ComputingThe}{article}{
  Title                    = {Computing the {M}ertens and {M}eissel-{M}ertens constants for
 sums over arithmetic progressions},
  Author                   = {Languasco, Alessandro},
  Author                   = {Zaccagnini, Alessandro},
  Journal                  = {Experiment. Math.},
  Year                     = {2010},
  Note                     = {With an appendix by Karl K. Norton, computational results available online: \url{http://www.math.unipd.it/~languasc/Mertenscomput/Mqa/Msumfinalresults.pdf} (URL last checked: 08.08.2016)},
  Number                   = {3},
  Pages                    = {279--284},
  Volume                   = {19}
}

\bib{PrimitiveSets}{article}{
  Title                    = {Primitive sets with large counting functions},
  Author                   = {Martin, Greg},
  Author                   = {Pomerance, Carl},
  Journal                  = {Publ. Math. Debrecen},
  Year                     = {2011},
  Number                   = {3-4},
  Pages                    = {521--530},
  Volume                   = {79},
  ISSN                     = {0033-3883}
}

\bib{LargeBias}{article}{
  Title                    = {{Large bias for integers with prime factors in arithmetic progressions}},
  Author                   = {{Meng}, X.},
  Journal                  = {ArXiv e-prints, available at 1607.01882},
  Year                     = {2016}
}

\bib{OnA}{article}{
  Title                    = {On a problem of {E}rd{\H o}s and {S}\'ark\"ozy},
  Author                   = {Schoen, Tomasz},
  Journal                  = {J. Combin. Theory Ser. A},
  Year                     = {2001},
  Number                   = {1},
  Pages                    = {191--195},
  Volume                   = {94}
}

\bib{IntroductionTo}{book}{
  Title                    = {Introduction to analytic and probabilistic number theory},
  Author                   = {Tenenbaum, G{\'e}rald},
  Publisher                = {American Mathematical Society, Providence, RI},
  Year                     = {2015},
  Edition                  = {third edn.},
  Series                   = {Graduate Studies in Mathematics},
  Volume                   = {163},
  ISBN                     = {978-0-8218-9854-3}
}

\end{biblist}
\end{bibdiv}

\end{document}